\newcommand{\N}{{\mathbb N}}
\renewcommand{\P}{{\mathbb P}}
\newcommand{\Z}{{\mathbb Z}}
\newcommand{\Sym}{{\rm{Sym}}}
\newcommand{\ko}{{\mathcal O}}
\newcommand{\s}{\mathcal}
\newcommand{\sI}{{\s I}}
\newcommand{\punkt}{\HHspace{-.3ex}\raise.15ex\HHbox to1ex{\HHuge.}}
\DeclareMathOperator{\im}{im}
\DeclareMathOperator{\rank}{rank}
\DeclareMathOperator{\Seg}{Seg}
 \newcommand{\paper}{: \begin{it}}
\newcommand{\jour }{, \end{it}}
\newtheorem{theorem}{Theorem}[section]
\newtheorem{proposition}[theorem]{Proposition}
\newtheorem{corollary}[theorem]{Corollary}
\theoremstyle{definition}
\newtheorem{example}[theorem]{Example}
\theoremstyle{remark}
\newtheorem{remark}[theorem]{Remark}
\numberwithin{equation}{section}
\begin{document}

\title[New examples of defective Segre-Veronese varieties]
{
New examples of defective secant varieties of Segre-Veronese varieties
}
\thanks{The first author is partly supported by NSF grant DMS-0901816.}

\author{Hirotachi Abo}
\address{Department of Mathematics, University of Idaho, Moscow, ID 83844, USA}
\email{abo@uidaho.edu}
\author{Maria Chiara Brambilla}
\address{Dip.\ di Scienze Matematiche, Universit\`a Politecnica delle Marche, Ancona, Italy}
\email{brambilla@dipmat.univpm.it }
\subjclass[2010]{14M99, 15A69, 15A72}

\begin{abstract}
We prove the existence of defective secant varieties of three-factor and four-factor Segre-Veronese varieties embedded in certain multi-degree. These defective secant varieties were previously unknown and are of importance in the classification of defective secant varieties of  Segre-Veronese varieties with three or more factors. 
\end{abstract}
\maketitle
\section{Introduction}
\label{sec:intro}
Let $X$ be a non-degenerate projective variety in projective space $\P^N$ and let $p_1, \dots, p_s$ be linearly independent points of $X$. Then the $(s-1)$-plane $\langle p_1, \dots, p_s\rangle$ spanned by $p_1, \dots, p_s$ is called a {\it secant $(s-1)$-plane} to $X$. The Zariski closure of the union of all secant $(s-1)$-planes to $X$ is called the {\it $s^{\mathrm{th}}$ secant variety} of $X$ and denoted by $\sigma_s(X)$.  A basic question about secant varieties is to find their  dimensions.  A simple dimension count indicates
\[
\dim \sigma_s(X) \leq \min\{ s (\dim X+1)-1, \ N\}. 
\]
If equality holds, we say that $\sigma_s(X)$ {\it has the expected dimension}. Otherwise $\sigma_s(X)$ is said to be {\it defective}.  

In $1995$, Alexander and Hirschowitz~\cite{AH} finished classifying all the defective secant varieties of Veronese varieties. This work completed the Waring-type problem for polynomials, which had remained unsolved for some time.  There are corresponding conjecturally complete lists of defective secant varieties for Segre varieties (see \cite{AOP}) and for Grassmann varieties (see \cite{K, BDD}). Very recently, the defectivity of two-factor Segre-Veronese varieties was systematically studied in~\cite{AB}, where we suggested that secant varieties of two-factor Segre-Veronese varieties are not defective modulo a fully described list of exceptions. However, the secant defectivity of more general Segre-Veronese varieties is still less well-understood. 

In this paper we explore higher secant varieties of Segre-Veronese varieties with three or more factors. Let $\mathbf{n}=(n_0, \dots, n_{k-1})$ be a $k$-tuple of positive integers and let $X_{\mathbf{n},\mathbf{d}}$ be the Segre-Veronese variety $\prod_{i=0}^{k-1} \P^{n_i}$ embedded in multi-degree $\mathbf{d} = (d_0, \dots, d_{k-1})$. 
In~\cite{AB}, we generalized the so-called m\'ethode d'Horace diff\'erentielle, which was first introduced in \cite{AH0} by Alexander and Hirschowitz for studying secant varieties of Veronese varieties,  to Segre-Veronese varieties. This differential Horace method allows one to check whether $\sigma_s(X_{\mathbf{n},\mathbf{d}})$ has the expected dimension by induction on $\mathbf{n}$ and $\mathbf{d}$. In order to apply the Horace method, one needs initial cases regarding either dimensions or degrees. It is therefore vital to classify defective secant varieties of Segre-Veronese varieties embedded in lower multi-degree including $(1,\dots, 1,d)$.  A significant step toward the completion of such a classification problem is to detect previously unknown defective cases. 
The main purpose of this paper is to show that certain three-factor and four-factor Segre-Veronese varieties $X_{\mathbf{n},\mathbf{d}}$ with $\mathbf{d}=(1, \dots, 1,d)$, $d \geq 2$,  have defective secant varieties, most of which were not known before.  It is worth noting that some of these defective cases have already been discovered by Catalisano, Geramita and Gimigliano in~\cite{CGG} and also by Carlini and Catalisano~\cite{CaCa}. The examples discussed in this paper can thus be naturally viewed as an extension of their results.

The idea of the proof is to find a non-singular rational subvariety  $C_{s-1}$ of $X_{\mathbf{n},\mathbf{d}}$ passing through $s$ generic points of $X_{\mathbf{n},\mathbf{d}}$. The existence of such a subvariety provides an upper bound of $\dim \sigma_s(X_{\mathbf{n},\mathbf{d}})$; namely,  
\[
 \dim \sigma_s(X_{\mathbf{n},\mathbf{d}}) \leq s\left(\dim X_{\mathbf{n},\mathbf{d}}  -\dim C_{s-1} \right) + \dim \langle C_{s-1} \rangle, 
\]
where $\langle C_{s-1} \rangle$ is the linear span of $C_{s-1}$. 
We will then show that if $k \in \{3,4\}$, there are infinitely many $(\mathbf{n}, s)$ such that the right hand side of the above inequality is strictly smaller than
\[
\min \left\{s\left(\sum_{i=0}^{k-1} n_i+1\right)-1, \ {n_{k-1}+d \choose d}\prod_{i=0}^{k-2}(n_i+1)
-1\right\}.
\]
This establishes the defectivity of surprisingly many $\sigma_s(X_{\mathbf{n},\mathbf{d}})$.

\section{Preliminaries}
\label{sec:preliminaries}
For a given non-negative integer $k$, 
let $\mathbf{x}$ be an ordered list of $k$ integers. We write $\mathbf{x}[i]$ for the $i^\mathrm{th}$ element of $\mathbf{x}$ for every $i \in \{0, \dots, k-1\}$. We sometimes write $\mathbf{x}[-i]$ instead of $\mathbf{x}[k-i]$. This is useful if the number of elements in $\mathbf{x}$ is not specified. For a non-empty subset  $\{i_0, \dots, i_{a-1}\}$ of $\{0, \dots, k-1\}$, 
\[
\mathbf{x}\left[\{i_0, \dots, i_{a-1}\}\right] = \{\mathbf{x}[j] \ | \ j \in  \{i_0, \dots, i_{a-1}\}\}.
\]  
We also write $\mathbf{x}\left[i_0, \dots, i_{a-1}\right]$ for $\mathbf{x}\left[\{i_0, \dots, i_{a-1}\}\right]$. 
Thus $\mathbf{x}[i]$ also means the one-element list consisting  of the $i^{\mathrm{th}}$ element of $\mathbf{x}$. 
Let $i$ be a non-negative integer and let $a$ be a positive integer with $i < k-a$. 
Then  
\[
\mathbf{x}[i,\dots, -a] = \{ \mathbf{x}[j] \ | \  i \leq j \leq k-a\}.
\] 
For two ordered lists  of  $k$ integers  $\mathbf{x}$ and $\mathbf{y}$,  we say that $\mathbf{x} \geq \mathbf{y}$ if $\mathbf{x}[i] \geq \mathbf{y}[i]$ for every $i \in \{0, \dots, k-1\}$ and we denote 
by $\mathbf{x}  - \mathbf{y}$ the difference of two vectors. 

Define a function $N : (\Z_{\geq 0})^k \times (\Z_{\geq 0})^k \rightarrow \N$ by
\[
 N(\mathbf{n}, \mathbf{d}) = \prod_{i=0}^{k-1} {\mathbf{n}[i]+ \mathbf{d}[i] \choose \mathbf{d}[i]} 
\]
and 
a function $D: (\Z_{\geq 0})^k  \rightarrow \N$ by 
\[
 D(\mathbf{n} ) = \sum_{i=0}^{k-1} \mathbf{n}[i]. 
\]

Let $K$ denote an algebraically closed field of characteristic $0$ and  let $V$ be an $(n+1)$-dimensional vector space over $K$ with basis $\{e_0, \dots, e_n\}$. Then we denote by 
\begin{itemize}
\item[-] $V^* = \mathrm{Hom}_K(V,K)$ the dual of $V$, 
\item[-] $\{e_0^*, \dots, e_n^*\}$ a dual basis for $V^*$ with relation $e_i^*(e_j) = \delta_{ij}$,  
\item[-] $\Sym_dV$ the $d^{\mathrm{th}}$ symmetric power of $V$, 
\item[-] $\P V$ the projective space of $V$, 
\item[-] $[v] \in \P V$ the equivalence class containing $v \in V \setminus \{0\}$,  
\item[-] $\langle S \rangle$ the linear span of $S$ if $S$ is a subset of  $\P V$,  
\item[-] $\nu_d$ the $d^{\mathrm{th}}$ Veronese map from $\P V $ to $\P \Sym_dV$, i.e., the map given by sending $[v]$ to $[v^d]$. 
\item[-] $\widehat X$ the affine cone over a variety $X \subset \P V$ in $V$. 
\item[-] $T_p(X)$ the projective tangent space to a variety $X \subset \P V$ at $p \in X$. 
\end{itemize}

Let $k$ be a positive integer, let $\mathbf{n}$ and $\mathbf{d}$ be two $k$-tuples of positive integers.  For each $i \in \{0, \dots, k-1\}$, let $V_i$ be an $(\mathbf{n}[i]+1)$-dimensional vector space over $K$ with basis $S_i = \left\{e_{i,0}, \dots, e_{i,\mathbf{n}[i]}\right\}$. Then we call an element of the $N(\mathbf{n},\mathbf{d})$-dimensional vector space $\bigotimes_{i=0}^{k-1} \Sym_{\mathbf{d}[i]} V_i$ a {\it partially symmetric tensor}, because it is invariant under the natural action of the symmetric group on $S_i$ for each $i \in \{0, \dots, k-1\}$. 

A partially symmetric tensor $t  \in \bigotimes_{i=0}^{k-1} \Sym_{\mathbf{d}[i]} V_i$ is said to have {\it rank one} if $t$ is of the form $u_0^{\mathbf{d}[0]} \otimes \cdots \otimes u_{k-1}^{\mathbf{d}[k-1]}$ for some $u_i \in V_i$. We say that $t$ has {\it rank $s$} if $t$ can be written as a linear combination of $s$ rank-one partially symmetric tensors, but not fewer. In other words, $t$ has rank $s$ if it lies in the linear span of $s$ linearly independent rank one partially symmetric tensors in $\bigotimes_{i=0}^{k-1} \Sym_{\mathbf{d}[i]} V_i$. 

Now consider the map from $\prod_{i=0}^{k-1} \Sym_{\mathbf{d}[i]} V_i$ to $\bigotimes_{i=0}^{k-1}\Sym_{\mathbf{d}[i]} V_i$ given by sending $\left(u_0^{\mathbf{d}[0]}, \cdots, u_{k-1}^{\mathbf{d}[k-1]}\right)$ to $u_0^{\mathbf{d}[0]} \otimes \cdots \otimes u_{k-1}^{\mathbf{d}[k-1]}$. This  induces the embedding of $\prod_{i=0}^{k-1} \nu_{\mathbf{d}[i]}(\P V_i)$ to  $\P \left( \bigotimes_{i=0}^{k-1}\Sym_{\mathbf{d}[i]} V_i\right)$. The image of $\prod_{i=0}^{k-1} \nu_{\mathbf{d}[i]}(\P V_i)$ under this  embedding is called a {\it Segre-Veronese variety} and denoted by $\mathrm{Seg}\left(\prod_{i=0}^{k-1} \nu_{\mathbf{d}[i]}(\P V_i)\right)$ or $X_{\mathbf{n}, \mathbf{d}}$.  Note that the affine cone over the $D(\mathbf{n})$-dimensional variety  $X_{\mathbf{n}, \mathbf{d}}$ in $\bigotimes_{i=0}^{k-1}\Sym_{\mathbf{d}[i]} V_i$ is the set of non-zero rank-one partially symmetric tensors. 

Recall that a secant $(s-1)$-plane to $X_{\mathbf{n}, \mathbf{d}}$ is the linear span of $s$ linearly independent points in $X_{\mathbf{n}, \mathbf{d}}$. By definition, it is clear that points which lie in a secant $(s-1)$-plane to $X_{\mathbf{n}, \mathbf{d}}$ correspond to rank-$s$ partially symmetric tensors in $\bigotimes_{i=0}^{k-1}\Sym_{\mathbf{d}[i]} V_i$. Thus $\sigma_s(X_{\mathbf{n}, \mathbf{d}})$ can be viewed as the projectivization of the closure of the set of partially symmetric tensors of rank at most $s$. 

In order to check the defectivity of $\sigma_s(X_{\mathbf{n}, \mathbf{d}})$, we need to determine whether the equality  
\[
\dim \sigma_s(X_{\mathbf{n}, \mathbf{d}}) =  \min \left\{ s \left[D(\mathbf{n})+1\right], \ N(\mathbf{n}, \mathbf{d}) \right\}-1
\]
holds.  To find the dimension of $\sigma_s(X_{\mathbf{n}, \mathbf{d}})$, it is sufficient to  compute the dimension of the tangent space $T_q [\sigma_s(X_{\mathbf{n}, \mathbf{d}})]$ to $\sigma_s(X_{\mathbf{n}, \mathbf{d}})$ at a generic point $q$.
To do so,  it is useful to use the following classical theorem called Terracini's lemma: 
\begin{theorem}[Terracini's Lemma] 
\label{th:terracini}
Let $p_1,\dots, p_s$ be generic points of $X_{\mathbf{n}, \mathbf{d}}$ and let $q$ be a generic point $q$ of $\langle p_1, \dots, p_s\rangle$.  Then 
\[
 T_q [\sigma_s(X_{\mathbf{n}, \mathbf{d}})] = \left\langle T_{p_1} (X_{\mathbf{n}, \mathbf{d}}), \dots, T_{p_s} (X_{\mathbf{n}, \mathbf{d}})\right\rangle. 
 \]
\end{theorem}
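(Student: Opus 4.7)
The plan is to prove Terracini's Lemma via the classical parametrization argument; nothing in the proof is special to the Segre-Veronese setting beyond the smoothness of $X_{\mathbf{n},\mathbf{d}}$. Write $X = X_{\mathbf{n},\mathbf{d}}$ and $W = \bigotimes_{i=0}^{k-1}\Sym_{\mathbf{d}[i]}V_i$. I would first introduce the \emph{addition morphism}
\[
\mu : \widehat{X} \times \cdots \times \widehat{X} \longrightarrow W, \qquad (v_1,\dots,v_s) \longmapsto v_1 + \cdots + v_s,
\]
where the source has $s$ factors. Since the affine cone over a secant $(s-1)$-plane to $X$ is by definition the sum of the $s$ lines through $0$ corresponding to the $s$ spanning points, the Zariski closure of the image of $\mu$ is the affine cone $\widehat{\sigma_s(X)}$, and $\mu$ is dominant onto this cone.

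Next I would compute the differential of $\mu$. Since $\mu$ is the restriction to $\widehat{X}^{\,s}$ of a $K$-linear map $W^{\oplus s} \to W$, its differential at $\mathbf{v} = (v_1,\dots,v_s)$ is
\[
d\mu_{\mathbf{v}} : T_{v_1}\widehat{X}\oplus\cdots\oplus T_{v_s}\widehat{X} \longrightarrow W, \qquad (w_1,\dots,w_s) \longmapsto w_1+\cdots+w_s,
\]
whose image is the vector-space sum $\sum_{i=1}^{s} T_{v_i}\widehat{X} \subseteq W$. Because $X$ is smooth (as a product of Veronese images of projective spaces) and $K$ has characteristic zero, generic smoothness applied to $\mu$ guarantees that, for generic $p_i \in X$ and generic lifts $v_i \in \widehat{X}$ of them, the image of $d\mu_{\mathbf{v}}$ coincides with the full affine tangent space $T_{\widehat{q}}\,\widehat{\sigma_s(X)}$, where $\widehat{q} := v_1 + \cdots + v_s$ is a lift of a suitable generic point $q \in \langle p_1,\dots,p_s\rangle$ (the $v_i$ being obtained by rescaling fixed lifts $\widehat{p}_i$ by the coordinates of $\widehat{q}$ in that frame).

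Finally, I would translate this equality from affine tangent spaces to projective ones. Each $T_{v_i}\widehat{X}$ is a linear subspace of $W$ containing the line $K v_i$, and its projectivization is by definition $T_{p_i}(X)$; the same passage identifies $\P\bigl(T_{\widehat{q}}\,\widehat{\sigma_s(X)}\bigr)$ with $T_q[\sigma_s(X)]$. Projectivizing $\sum_{i} T_{v_i}\widehat{X}$ therefore produces exactly the linear span $\langle T_{p_1}(X),\dots,T_{p_s}(X)\rangle$, giving the statement. The one delicate step is the invocation of generic smoothness, which requires that $\widehat{X}^{\,s}$ be smooth and irreducible at the chosen point and relies essentially on $\operatorname{char} K = 0$; once these are in hand the argument is mechanical, and everything else reduces to the observation that $\mu$ is linear on each factor.
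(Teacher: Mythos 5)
The paper quotes Terracini's Lemma as a classical theorem and supplies no proof of it, so there is no in-paper argument to compare yours against; judged on its own, your proof is the standard one and is essentially correct. The addition map $\mu$, the identification of its differential with the sum map $(w_1,\dots,w_s)\mapsto w_1+\cdots+w_s$, and the appeal to generic smoothness in characteristic zero constitute exactly the classical argument. Two small points deserve explicit mention if this were written out in full. First, $\widehat{X}^{\,s}$ is smooth only away from the locus where some coordinate $v_i$ lies at the cone vertex, so generic smoothness should be invoked on the smooth locus; this costs nothing since all points in play are generic. Second, the passage from ``the conclusion holds at a generic point of $\widehat{X}^{\,s}$'' to ``it holds for generic $p_1,\dots,p_s$ together with a generic $q$ in their span'' is smoothed over by the observation that the image of $d\mu_{(v_1,\dots,v_s)}$, namely $\sum_i T_{v_i}\widehat{X}$, is unchanged under rescaling each $v_i$, because the affine tangent space to a cone is constant along its rulings; hence the conclusion of generic smoothness, which a priori holds only on a dense open subset of $\widehat{X}^{\,s}$, really does apply to the tuples you obtain by rescaling fixed lifts of the generic $p_i$. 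With those two remarks your argument is complete and proves the statement as used in the paper.
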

\begin{remark}
\label{rem:tangent}
Let $u_i, v_i \in V_i \setminus \{0\}$ for each $i \in \{0, \dots, k-1\}$. Consider the parametric curve given by $(u_0+\varepsilon v_0)^{\mathbf{d}[0]} \otimes \cdots \otimes (u_{k-1} + \varepsilon v_{k-1})^{\mathbf{d}[k-1]}$, for $\varepsilon \in K$. Taking the derivative of this parametric curve at $\varepsilon=0$ proves that the affine cone over the tangent space to $X_{\mathbf{n},\mathbf{d}}$ at $p = \left[u_0^{\mathbf{d}[0]} \otimes \cdots \otimes u_{k-1}^{\mathbf{d}[k-1]}\right]$ in $\bigotimes_{i=0}^{k-1} \Sym_{\mathbf{d}[i]} V_i$ is given by 
\[
 \widehat{T}_p (X_{\mathbf{n},\mathbf{d}}) = \sum_{i=0}^{k-1} u_0^{\mathbf{d}[0]} \otimes \cdots \otimes u_i^{\mathbf{d}[i]-1} V_i \otimes  \cdots \otimes u_{k-1}^{\mathbf{d}[k-1]}. 
\] 
Thus  $\widehat{T}_p (X_{\mathbf{n},\mathbf{d}})$ is represented by a $\left[\sum_{i=0}^{k-1} (n_i+1)\right] \times N(\mathbf{n}, \mathbf{d})$ matrix. In order to compute the dimension of $T_q [\sigma_s(X_{\mathbf{n}, \mathbf{d}})]$, we select $s$ points $p_0, \dots, p_{s-1} \in X_{\mathbf{n}, \mathbf{d}}$, construct the matrix representing $\widehat{T}_{p_i} (X_{\mathbf{n},\mathbf{d}})$, stack these matrices and then compute the rank of the resulting matrix. If the rank of this matrix is equal to $\min \{ s(D(\mathbf{n})+1), \ N(\mathbf{n}, \mathbf{d}) \}$, then we can conclude that $\sigma_s(X_{\mathbf{n}, \mathbf{d}})$ has the expected dimension by semi-continuity. 
\end{remark}
\begin{remark}
\label{rem:fat}
For simplicity, we write $\P^{\mathbf{n}}$ for the multi-projective space $\prod_{i=0}^{k-1} \P^{\mathbf{n}[i]}$. Note that $H^0\mathcal{O}_{\P^{\mathbf{n}}}(\mathbf{d})$ can be identified with the set of hyperplanes in $\P^{N(\mathbf{n},\mathbf{d})-1}$. Since the condition that a hyperplane $H\subset \P^{N(\mathbf{n},\mathbf{d})-1}$ contains $T_p(X_{\mathbf{n}, \mathbf{d}})$ is equivalent to the condition that $H$ intersects $X_{\mathbf{n}, \mathbf{d}}$ in the first infinitesimal neighborhood of $p$, the elements of $H^0 \mathcal{I}_{p^2}(\mathbf{d})$ can be viewed as hyperplanes containing 
$T_p(X_{\mathbf{n}, \mathbf{d}})$, where $p^2$ denotes the double point supported at $p$. Let $Z$ be a collection of $s$ double points on 
$\P^{\mathbf{n}}$ and let $\sI_Z$ be its ideal sheaf. Terracini's lemma implies that $\dim \sigma_s(X_{\mathbf{n},  \mathbf{d}})$ equals to the value of the Hilbert function $h_{\P^{\mathbf{n}}}(Z, \cdot)$ of $Z$ at $\mathbf{d}$, i.e.,
\[
h_{\P^{\mathbf{n}}}(Z,\mathbf{d})=\dim H^0\ko_{\P^{\mathbf{n}}}(\mathbf{d})-
\dim H^0\sI_Z(\mathbf{d}).
\]
To prove that  $\sigma_s(X_{\mathbf{n}, \mathbf{d}})$ has the expected
dimension is equivalent to prove that
\[
h_{\P^{\mathbf{n}}}(Z,\mathbf{d})= \min \left\{
 s\left[D(\mathbf{n}) +1\right], \ N(\mathbf{n},\mathbf{d}) \right\}.
\]
\end{remark}
\section{Numeric conditions for $\sigma_s(X_{\mathbf{n}, \mathbf{d}})$ to be defective}
\label{sec:numeric}
Let $k$ be a non-negative integer, let $\Lambda = \{0, \cdots, k-1\}$,  let $\mathbf{n} \in \N^k$ and let $\mathbf{d} \in \N^k$ such that   $\mathbf{d}[j] = 1$ for every $j \in \{0, \dots, k-2\}$ and $\mathbf{d}[k-1] \geq 2$. 
Consider a $k$-tuple $\mathbf{e}_0$ of non-negative integer such that 
$\mathbf{e}_0 \leq \mathbf{d}$ and $1 \leq \mathbf{e}_0[k-1] \leq \mathbf{d}[k-1]-1$.  
We set $\mathbf{e}_1=\mathbf{d}-\mathbf{e}_0$. 
For each $i \in \Z_2$, let $\Lambda_i = \left\{j \in \Lambda \  | \  \mathbf{e}_i[j] \not=0 \right\}$. 
Let $\varphi \in \bigotimes_{j \in \Lambda}  \Sym_{\mathbf{d}[j]} V_j$. 
For each $i \in \Z_2$, let $\Gamma_i(\varphi)$ be the corresponding contraction by $\varphi$: 
\[
\Gamma_i(\varphi) : \left(\bigotimes_{j \in \Lambda_{i-1}} \Sym_{\mathbf{e}_{i-1}[j]} V_j\right)^*
\rightarrow \bigotimes_{j \in \Lambda_i} \Sym_{\mathbf{e}_i[j]} V_j
\]
Note  that if $\rank \varphi=1$,  then $\rank \Gamma_i(\varphi) =1$. Thus if $\varphi$ has rank $s$, then the rank of $\Gamma_i(\varphi)$ is less than or equal to $s$.  
By definition, $\Gamma_0(\varphi)=\Gamma_1(\varphi)^T$. 

For each $i \in \Z_2$, let $A_i$ denote $\im \Gamma_i(\varphi)$ and let $B_i$ denote $\ker \Gamma_{i-1}(\varphi)$. Suppose that $\varphi$ is a  rank-$s$ partially symmetric tensor 
in $\bigotimes_{j \in \Lambda} \Sym_{\mathbf{d}[j]} V_j$ and can be written as a linear combination of $s$ rank-one partially symmetric tensors $\varphi_a$,  
$0 \leq a \leq s-1$,  in $\bigotimes_{j \in \Lambda} \Sym_{\mathbf{d}[j]} V_j$.  Then $A_i$ is the subspace of $\bigotimes_{j \in \Lambda_i} \Sym_{\mathbf{e}_i[j]} V_j$ spanned by the $\im \Gamma_i(\varphi_a)$'s.  Note that $\dim \im \Gamma_i(\varphi_a)=1$ for each $0\leq a\leq k-1$.
Hence if $\varphi$ is sufficiently general and if
\begin{equation}
\label{new}
s\leq \dim\left(\bigotimes_{j \in \Lambda_i} \Sym_{\mathbf{e}_i[j]} V_j\right)=N(\mathbf{n},\mathbf{e}_i).
\end{equation}  
then  $A_i$ has dimension $s$. 

For each $i \in \Z_2$, we define $C_{i,s-1}$ as  
\[
C_{i,s-1}=\P A_i \cap \Seg \left(\prod_{j \in \Lambda_i}  \nu_{\mathbf{e_i}[j]} (\P V_j)\right).
\]
If $\varphi$ is sufficiently general and  if
\begin{eqnarray}
\label{eq:ineq0}
\max_{i\in \Z_2} \left\{N(\mathbf{n}, \mathbf{e}_i) - \sum_{j \in \Lambda_i} \mathbf{n}[j]+1\right\} \leq 
s,
\end{eqnarray}
then $C_{i,s-1}$ is a non-degenerate, non-singular variety of dimension 
\begin{eqnarray*}
 && \dim \P A_i +\dim \Seg \left(\prod_{j \in \Lambda_i}  \nu_{\mathbf{e_i}[j]} (\P V_j)\right) -\dim 
\left\langle\P A_i \cup \Seg \left(\prod_{j \in \Lambda_i}  \nu_{\mathbf{e_i}[j]} (\P V_j)\right)\right\rangle 
\\
 &= & s-1+\dim \mathrm{Seg}\left(\prod_{j \in \Lambda_i} \nu_{\mathbf{e_i}[j]} (\P V_j)\right) 
 - \dim \P\left(\bigotimes_{j \in \Lambda_i} \Sym_{\mathbf{e}_i[j]} V_j\right)\\
& = & 
s+\sum_{j \in \Lambda_i}\mathbf{n}[j] - N(\mathbf{n}, \mathbf{e}_i) \\ 
& \geq & 1. 
\end{eqnarray*}
Let us define
\begin{eqnarray*}
E(\mathbf{n}) & = &  \sum_{i \in \Z_2}\left\{\sum_{j \in \Lambda_i}\mathbf{n}[j] -N(\mathbf{n},\mathbf{e}_i)+s \right\}-\mathbf{n}[k-1] \\
& = & D(\mathbf{n}) - \sum_{i \in \Z_2} (N(\mathbf{n},\mathbf{e}_i)-s). 
\end{eqnarray*} 
Suppose that 
\begin{eqnarray}
&&\mathbf{n}[\Lambda_i[0]] \geq  N(\mathbf{n},\mathbf{e}_i)-s>0  \label{eq:ineq1}
\end{eqnarray}
for each $i\in\Z_2$. 
Then we obtain  the inequalities \eqref{new} and 
\begin{eqnarray}
\label{eq:ineq2}
D(\mathbf{n}) > E(\mathbf{n}). 
\end{eqnarray}
\begin{proposition}
\label{prop:rnc} 
Suppose that Inequalities~(\ref{eq:ineq0}) and (\ref{eq:ineq1})   
are satisfied. For each $a \in \{0, \dots, s-1\}$, let $[\varphi_a]$ be a generic  point of $X_{\mathbf{n},\mathbf{d}}$. Then there is a proper non-singular subvariety of $X_{\mathbf{n}, \mathbf{d}}$ of dimension $E(\mathbf{n})$ passing through $[\varphi_0], \dots, [\varphi_{s-1}]$. 
\end{proposition}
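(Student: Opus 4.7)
The plan is to construct the required subvariety as (the Segre--Veronese image of) a fibered product of pullbacks of $C_{0,s-1}$ and $C_{1,s-1}$, glued along their common $\P V_{k-1}$ factor. Since $\mathbf{d}[j]=1$ for $j\leq k-2$ and $\mathbf{d}[k-1]\geq 2$, one has $\Lambda_0\cap\Lambda_1=\{k-1\}$ and $\Lambda_0\cup\Lambda_1=\{0,\dots,k-1\}$, so both $\Lambda_i$ share exactly the last index. First I pull each $C_{i,s-1}$ back under the closed embedding $\prod_{j\in\Lambda_i}\nu_{\mathbf{e}_i[j]}$ to obtain a smooth subvariety $\widetilde{C}_i\subset\prod_{j\in\Lambda_i}\P V_j$ of the same dimension $s+\sum_{j\in\Lambda_i}\mathbf{n}[j]-N(\mathbf{n},\mathbf{e}_i)$. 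Writing each generic summand as $\varphi_a=u_{a,0}^{\mathbf{d}[0]}\otimes\cdots\otimes u_{a,k-1}^{\mathbf{d}[k-1]}$, the $s$ marked points $\tilde q_{a,i}=([u_{a,j}])_{j\in\Lambda_i}$ lie on $\widetilde{C}_i$, since their Segre--Veronese images are exactly $[\Gamma_i(\varphi_a)]\in\P A_i\cap\Seg(\cdots)=C_{i,s-1}$.

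Next I form the fibered product
\[
Y:=\widetilde{C}_0\times_{\P V_{k-1}}\widetilde{C}_1\;\subset\;\prod_{j=0}^{k-1}\P V_j\;=\;\P^{\mathbf{n}}
\]
via the two projections $\pi_i:\widetilde{C}_i\to\P V_{k-1}$ onto the common factor, and push it forward through the full Segre--Veronese embedding $\prod_{j=0}^{k-1}\nu_{\mathbf{d}[j]}$ to obtain $Y'\subset X_{\mathbf{n},\mathbf{d}}$. The lifts $\tilde p_a=([u_{a,0}],\dots,[u_{a,k-1}])$ lie in $Y$, because $\pi_0(\tilde q_{a,0})=[u_{a,k-1}]=\pi_1(\tilde q_{a,1})$, and hence $[\varphi_a]\in Y'$ for every $a$. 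Inequality (\ref{eq:ineq2}) gives $E(\mathbf{n})<D(\mathbf{n})=\dim X_{\mathbf{n},\mathbf{d}}$, so once I know that $Y'$ has dimension $E(\mathbf{n})$ at each $[\varphi_a]$ it is automatically a proper subvariety; if $Y'$ is reducible I replace it by the union of those components through the marked points.

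The technical heart of the argument, and the step I expect to be the main obstacle, is showing that $Y$ is smooth of pure dimension $\dim\widetilde{C}_0+\dim\widetilde{C}_1-\mathbf{n}[k-1]=E(\mathbf{n})$ at each $\tilde p_a$. Concretely I need the two tangent maps $d\pi_i:T_{\tilde q_{a,i}}\widetilde{C}_i\to T_{[u_{a,k-1}]}\P V_{k-1}$ to be simultaneously surjective, so that $\widetilde{C}_0$ and $\widetilde{C}_1$ meet transversally over $\P V_{k-1}$ at $\tilde p_a$. Dominance of each $\pi_i$ is forced by (\ref{eq:ineq1}) together with the fact that the marked points are sent to the $s$ generic and pairwise distinct points $[u_{a,k-1}]\in\P V_{k-1}$. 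Submersivity at $\tilde q_{a,i}$ should then follow from a Bertini-type argument already implicit in the dimension computation preceding the proposition: $\P A_i$ is a generic $(s-1)$-plane in $\P\bigl(\bigotimes_{j\in\Lambda_i}\Sym_{\mathbf{e}_i[j]}V_j\bigr)$, so $C_{i,s-1}$ meets the Segre--Veronese variety transversely, and the slack guaranteed by (\ref{eq:ineq1}) is sufficient for the projection to $\P V_{k-1}$ to remain submersive at the generic points $\tilde q_{a,i}$. Once this transversality is established, the Segre--Veronese embedding transports smoothness and dimension from $Y$ to $Y'$, and the proposition follows.
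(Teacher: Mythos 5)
Your construction is the same as the paper's: form the fiber product $C_{0,s-1}\times_{\P V_{k-1}} C_{1,s-1}$ over the shared last factor and embed it into $X_{\mathbf{n},\mathbf{d}}$ by the Segre map; your verification that the points $[\varphi_a]$ lift to the fiber product (via $[\Gamma_i(\varphi_a)]\in \P A_i\cap\Seg(\cdots)$) is correct and in fact more explicit than the paper's. The gap is exactly at the step you yourself flag as the main obstacle, and the justification you sketch there does not work. You argue that each $\pi_i$ is submersive at the marked points by a Bertini-type argument on the grounds that $\P A_i$ is a \emph{generic} $(s-1)$-plane. It is not: $A_i$ is spanned by the $s$ rank-one tensors $\Gamma_i(\varphi_a)$, so $\P A_i$ is the span of $s$ points of $\Seg\left(\prod_{j\in\Lambda_i}\nu_{\mathbf{e}_i[j]}(\P V_j)\right)$ --- a highly special linear space relative to that variety (a truly generic $(s-1)$-plane would not meet it this way at all). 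So neither the transversality of $\P A_i$ with the Segre--Veronese variety nor the submersivity of $\pi_i$ can be deduced from genericity of the plane; they must be extracted from the structure of $A_i$ itself, and this is where hypothesis (\ref{eq:ineq1}) has to enter, which your sketch never uses in a substantive way.

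The paper supplies the missing piece by a concrete linear-algebra description of the fibers. A point $[t\otimes\ell^{\mathbf{e}_i[k-1]}]$ of the Segre--Veronese variety lies in $\P A_i$ if and only if it is annihilated by $B_i=\ker\Gamma_{i-1}(\varphi)=A_i^{\perp}$; contracting $B_i$ against all tensor factors except the $V_{\Lambda_i[0]}$-factor produces a subspace $U_i^*\subseteq V_{\Lambda_i[0]}^*$ of dimension at most $N(\mathbf{n},\mathbf{e}_i)-s$, and the fiber of $\pi_i$ over $[\ell]$ is identified with $\Seg\left(\P(U_i^*)^{\perp}\times\prod_{j\in\Lambda_i[1,\dots,-2]}\P V_j\right)$. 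Inequality (\ref{eq:ineq1}), i.e.\ $\mathbf{n}[\Lambda_i[0]]\geq N(\mathbf{n},\mathbf{e}_i)-s$, is precisely what makes $(U_i^*)^{\perp}\neq 0$, hence $\pi_i$ surjective with fibers of the stated dimension over \emph{every} point of $\P V_{k-1}$, not just generic ones. This yields $\dim C_{s-1}=E(\mathbf{n})$ and the smoothness directly, with no appeal to transversality or Bertini. Without this computation (or an equivalent substitute), your argument does not establish the dimension of $Y$ at the marked points.
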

\begin{proof}
Let $[\varphi] = \left[\sum_{a=0}^{s-1} \varphi_a\right]$.  Recall that, for each $i \in \Z_2$, $B_i$ denotes $\ker \Gamma_{i-1}(\varphi)$. Because of the choice of $\varphi$, the rank of $\Gamma_i(\varphi)$ is $s$, and hence $B_i$ has dimension $N(\mathbf{n},\mathbf{e}_{i})-s$ for each $i \in \Z_2$.  
Recall also that $1 \leq \mathbf{e}_0[k-1] \leq \mathbf{d}[k-1]-1$ and $\mathbf{e}_1= \mathbf{d}-\mathbf{e}_0$. Thus $k-1 \in \Lambda_i$ for each $i \in \Z_2$. 
For a given rank-one partially symmetric tensor in $\bigotimes_{j \in \Lambda_i\setminus \Lambda_i[0]} \Sym_{\mathbf{e}_i[j]} V_j$, consider the associated contraction  
\[
\left(\bigotimes_{j \in \Lambda_i} \Sym_{\mathbf{e}_i[j]} V_j\right)^* \rightarrow \left(\Sym_{\mathbf{e}_i[\Lambda_i[0]]} V_{\Lambda_i[0]} \right)^*= V_{\Lambda_i[0]}^* . 
\]
Note that the last equality holds, because we are assuming  $\mathbf{e}_i[\Lambda_i[0]]=1$. The image of $B_i$ under this contraction is a subspace $U_i^*$ of $V_{\Lambda_i[0]}^*$ with dimension at most $N(\mathbf{n},\mathbf{e}_i)-s$. So Condition~(\ref{eq:ineq1}) implies that,  for every $\left[\ell^{\mathbf{e}_i[k-1]}\right] \in \nu_{\mathbf{e}_i[k-1]} (\P V_{k-1})$, there exists a rank-one  tensor $t$ in $\bigotimes_{j \in \Lambda_i\setminus \{k-1\}} \Sym_{\mathbf{e}_i[j]} V_j = \bigotimes_{j \in \Lambda_i\setminus \{k-1\}}  V_j$ such that $[t \otimes \ell^{\mathbf{e}_i[k-1]}] \in C_{i,s-1}$. 

For each $i \in \Z_2$, we define a map $\pi_i: C_{i,s-1} \rightarrow \P V_{k-1}$ by sending $\left[t \otimes \ell^{\mathbf{e}_i[k-1]}\right] $ to $[\ell]$. Then we have proved that $\pi_i$ is onto. The fiber of this map is 
\[
\Seg\left(\P (U_i^*)^\perp  \times \prod_{j \in \Lambda_i[1, \dots, -2]} \nu_{\mathbf{e}_i[j]} (\P V_j)\right) = \Seg\left(\P (U_i^*)^\perp  \times \prod_{j \in \Lambda_i[1, \dots, -2]} \P V_j \right) ,
\]
and so it has dimension
\begin{eqnarray*}
&& \dim V_{\Lambda_i[0]}^*- \dim  U_i^*-1+\sum_{j \in \Lambda_i[1, \dots, -2]}\mathbf{n}[j]  \\ 
 & = &    \mathbf{n}[\Lambda_i[0]]+1-N(\mathbf{n},\mathbf{e}_i)+s-1+\sum_{j \in \Lambda_i[1, \dots, -2]}\mathbf{n}[j]  \\
 & = &\sum_{j \in \Lambda_i[0, \dots, -2]}\mathbf{n}[j]-N(\mathbf{n},\mathbf{e}_i)+s . 
\end{eqnarray*}

Consider now the fiber product of $C_{0,s-1}$ and $C_{1,s-1}$ over $\P V_{k-1}$: 
\[
 C_{0,s-1} \times_{\P V_{k-1}} C_{1,s-1} = 
 \left\{ ([x],[y])  \in C_{0,s-1} \times  C_{1,s-1}\  |\  \pi_0([x]) = \pi_1([y]) 
 \right\}. 
\]
For simplicity, we write $C_{s-1}$ for $C_{0,s-1} \times_{\P V_{k-1}} C_{1,s-1}$. 
If $([x],[y]) \in C_{s-1}$, then, since $\pi_0([x]) = \pi_1([y])$, the tensor product of $x$ and $y$, which lies in $\bigotimes_{i \in \Z_2} \bigotimes_{j \in \Lambda_i }\Sym_{\mathbf{e}_i[j]} V_j$,  must lie in $\bigotimes_{j \in \Lambda} \Sym_{\mathbf{d}[j]} V_j$. Thus the Segre map 
\[
 \prod_{i \in \Z_2} \prod_{j \in \Lambda_i} \nu_{\mathbf{e}_i[j]}\left(\P V_j \right)  
 \rightarrow \P\left(\bigotimes_{i \in \Z_2} \bigotimes_{j \in \Lambda_i }\Sym_{\mathbf{e}_i[j]} V_j\right) 
\]
restricted to $C_{s-1}$ embeds  $C_{s-1}$ in $X_{\mathbf{n},\mathbf{d}}$. Since the fiber of the projection from $C_{s-1}$ to $\P V_{k-1}$ over each $[\ell] \in \P V_{k-1}$ has dimension $\sum_{j \in \Lambda_i[0, \dots, -2]}\mathbf{n}[j]-N(\mathbf{n},\mathbf{e}_i)+s$, we obtain 
\begin{eqnarray*}
 \dim C_{s-1} & = &
  \sum_{i \in \Z_2} \left\{ \sum_{j \in \Lambda_i[0, \dots, -2]}\mathbf{n}[j]-N(\mathbf{n},\mathbf{e}_i)+s\right\} +  \mathbf{n}[k-1] \\
 & = &   \sum_{i \in \Z_2} \left\{ \sum_{j \in \Lambda_i }\mathbf{n}[j]-N(\mathbf{n},\mathbf{e}_i)+s\right\} -  \mathbf{n}[k-1]  \\
 & = & E(\mathbf{n}). 
\end{eqnarray*}
The smoothness of $C_{s-1}$ follows immediately from the construction and the smoothness of the $C_{i,s-1}$'s. We have therefore completed the proof. 
\end{proof}
By the definitions of the $\mathbf{e}_i$'s, for any $j \in \Lambda \setminus \{k-1\}$, there is a 
unique pair of index $(i,a)$ 
such that $j = \Lambda_i[a]$. 
Let $\Phi$ be the map 
\[
\Phi: \prod_{i \in \Z_2} \prod_{j \in \Lambda_i} \Sym_{\mathbf{e}_i[j]} V_j^* \rightarrow 
\bigotimes_{j \in \Lambda} \Sym_{\mathbf{d}[j]} V_j^*
\]
such that the $j^{\mathrm{th}}$ component $\Phi(w)[j]$ is given by
\[
\Phi(w)[j] = \left\{
\begin{array}{ll}
w[\Lambda_i[a]] & \quad \mbox{if $j = \Lambda_i[a]$;}\\
w[\Lambda_0[-1]] w[\Lambda_1[-1] & \quad \mbox{if $j=k-1$.} 
\end{array}
\right.
\]
We write $F(\mathbf{n}, \mathbf{e}_0, \mathbf{e}_1)$ for 
\[
 s [ D(\mathbf{n})-E(\mathbf{n})] + N(\mathbf{n},\mathbf{d})-1-\sum_{i \in \Z_2} 
 N(\mathbf{n},\mathbf{e}_{i-1})[N(\mathbf{n},\mathbf{e}_i)-s]+\prod_{i \in \Z_2} [N(\mathbf{n},\mathbf{e}_i)-s]. 
\]
\begin{theorem}
\label{th:dim}
Suppose that Inequalities~(\ref{eq:ineq0}) and  (\ref{eq:ineq1}) are satisfied.  
Then 
\[
 \dim \sigma_s(X_{\mathbf{n},\mathbf{d}}) \leq 
 F(\mathbf{n}, \mathbf{e}_0, \mathbf{e}_1). 
\]
\end{theorem}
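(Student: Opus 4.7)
The plan is to combine Proposition~\ref{prop:rnc} with the general bound on secant dimensions via a smooth subvariety through $s$ general points, and then compute $\dim\langle C_{s-1}\rangle$ via the map $\Phi$. Proposition~\ref{prop:rnc} first supplies a smooth rational subvariety $C_{s-1}\subset X_{\mathbf{n},\mathbf{d}}$ of dimension $E(\mathbf{n})$ through the $s$ general points $p_a=[\varphi_a]$. Because $C_{s-1}$ is smooth and contained in $X_{\mathbf{n},\mathbf{d}}$, one has $\widehat{T}_{p_a}(C_{s-1})\subset\widehat{T}_{p_a}(X_{\mathbf{n},\mathbf{d}})\cap\widehat{\langle C_{s-1}\rangle}$ for each $a$. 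Combined with Terracini's lemma (Theorem~\ref{th:terracini}) and a standard tangent-space decomposition into a ``$C_{s-1}$-tangential'' part and a complementary part of dimension at most $D(\mathbf{n})-E(\mathbf{n})$, this yields
\[
\dim\sigma_s(X_{\mathbf{n},\mathbf{d}})\;\le\;s\bigl(D(\mathbf{n})-E(\mathbf{n})\bigr)+\dim\langle C_{s-1}\rangle.
\]

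The technical heart is bounding $\dim\langle C_{s-1}\rangle$ from above. Writing $W_i=\bigotimes_{j\in\Lambda_i}\Sym_{\mathbf{e}_i[j]}V_j$, recall $A_i\subset W_i$, and $B_i=\ker\Gamma_{i-1}(\varphi)\subset W_i^*$ is exactly the annihilator of $A_i$. For any tuple $w$ whose ``$\Lambda_0$-tensor'' $w_0:=\bigotimes_{j\in\Lambda_0}w[j]$ lies in $B_0$, a direct pairing computation shows that $\Phi(w)$ evaluated at any point of $C_{s-1}$ (which comes from a pair $(x,y)$ with $x\in A_0$ and $y\in A_1$) equals $w_0(x)\cdot w_1(y)=0$; the symmetric statement holds for $w_1\in B_1$. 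Extending $\Phi$ linearly to the subspace $B_0\otimes W_1^*+W_0^*\otimes B_1\subset W_0^*\otimes W_1^*$ and applying inclusion-exclusion gives that subspace dimension
\[
\sum_{i\in\Z_2}N(\mathbf{n},\mathbf{e}_{i-1})[N(\mathbf{n},\mathbf{e}_i)-s]-\prod_{i\in\Z_2}[N(\mathbf{n},\mathbf{e}_i)-s].
\]
The hypotheses ensure that the restriction of $\Phi$ to this subspace is injective, so its image yields that many linearly independent hyperplanes in $\P^{N(\mathbf{n},\mathbf{d})-1}$ containing $C_{s-1}$, hence the desired upper bound on $\dim\langle C_{s-1}\rangle$.

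Finally, substituting this bound into the Terracini inequality and regrouping recovers exactly the formula $\dim\sigma_s(X_{\mathbf{n},\mathbf{d}})\le F(\mathbf{n},\mathbf{e}_0,\mathbf{e}_1)$. The main obstacle is the injectivity of $\Phi$ on $B_0\otimes W_1^*+W_0^*\otimes B_1$, i.e.\ showing this subspace meets $\ker\Phi$ trivially; this is a transversality statement relying on the generic position of $A_0$ and $A_1$ (inherited from the generic choice of the $[\varphi_a]$'s) together with the arithmetic constraints of Inequalities~(\ref{eq:ineq0}) and~(\ref{eq:ineq1}).
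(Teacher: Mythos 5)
Your argument is essentially the paper's own proof: the same Terracini-type bound $\dim\sigma_s(X_{\mathbf{n},\mathbf{d}})\le s\,(D(\mathbf{n})-E(\mathbf{n}))+\dim\langle C_{s-1}\rangle$ via Proposition~\ref{prop:rnc}, followed by the same count of linear forms cutting out $\langle C_{s-1}\rangle$ using $\Phi$, the annihilators $B_i=A_i^{\perp}$, and inclusion--exclusion on $B_0\otimes W_1^*+W_0^*\otimes B_1$. The injectivity (transversality with $\ker\Phi$) that you flag as the main remaining obstacle is likewise left implicit in the paper, whose proof simply asserts that the space of forms vanishing on $H$ has dimension $\sum_{i\in\Z_2}N(\mathbf{n},\mathbf{e}_{i-1})[N(\mathbf{n},\mathbf{e}_i)-s]-\prod_{i\in\Z_2}[N(\mathbf{n},\mathbf{e}_i)-s]$.
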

\begin{proof}
We keep the same notation as in the proof of Proposition~\ref{prop:rnc}.  
By construction, $C_{s-1}$ is contained in the linear subspace $H\subseteq \P \left( \bigotimes_{j \in \Lambda} \Sym_{\mathbf{d}[j]} V_j \right)$ defined by the ideal generated by 
$\sum_{i \in \Z_2} \Phi\left[B_i \times \prod_{j \in  \Lambda_{i-1}} \Sym_{\mathbf{e}_{i-1}[j]} V_j^*\right]$. Since  this is the quotient vector space of $\bigoplus_{i \in \Z_2} \Phi\left[B_i \times \prod_{j \in  \Lambda_{i-1}} \Sym_{\mathbf{e}_{i-1}[j]} V_j^*\right]$ modulo $\Phi(B_0 \times B_1)$ and $\dim \Phi(B_0 \times B_1) \leq \dim B_0 \otimes B_1 = \prod_{i \in \Z_2} [N(\mathbf{n},\mathbf{e}_i)-s]$,  it follows from the assumption that 
\begin{eqnarray*}
\dim H  & \leq  & N(\mathbf{n},\mathbf{d})-1-\left\{\dim \bigoplus_{i \in \Z_2} \Phi\left[B_i \times \prod_{j \in  \Lambda_{i-1}} \Sym_{\mathbf{e}_{i-1}[j]} V_j^*\right] - \dim \Phi(B_0 \times B_1) \right\}\\ 
 & \leq & N(\mathbf{n},\mathbf{d})-1-\sum_{i \in \Z_2} 
 N(\mathbf{n},\mathbf{e}_{i-1})[N(\mathbf{n},\mathbf{e}_i)-s]+\prod_{i \in \Z_2} [N(\mathbf{n},\mathbf{e}_i)-s]. 
\end{eqnarray*}
Thus it follows from Terracini's lemma 
and Proposition~\ref{prop:rnc} that the dimension of  $\sigma_s(X_{\mathbf{n},\mathbf{d}})$ is bounded from above by 
\begin{eqnarray*}
 \dim  \sigma_s(X_{\mathbf{n},\mathbf{d}}) & \leq & 
 s[\dim X_{\mathbf{n}, \mathbf{d}} - \dim C_{s-1}] + \dim \langle C_{s-1} \rangle \\
 & \leq & s[\dim X_{\mathbf{n}, \mathbf{d}} - E(\mathbf{n})]+ \dim H \\
 & \leq & F(\mathbf{n}, \mathbf{e}_0, \mathbf{e}_1), 
\end{eqnarray*}
which completes the proof. 
\end{proof}
As an immediate consequence, we obtain the following corollary: 
\begin{corollary}
\label{cor:defect}
Suppose that Conditions~(\ref{eq:ineq0}) and (\ref{eq:ineq1})
are satisfied and that 
\begin{eqnarray}
\label{eq:ineq3}
F(\mathbf{n},\mathbf{e}_0, \mathbf{e}_1) < \min \{s[D(\mathbf{n})+1], N(\mathbf{n},\mathbf{d}) \}-1. 
\end{eqnarray}
Then $\sigma_s(X_{\mathbf{n}, \mathbf{d}})$ is defective.  
\end{corollary}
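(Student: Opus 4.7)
The plan is to simply combine the upper bound from Theorem~\ref{th:dim} with the definition of defectivity. Recall from the introduction that $\sigma_s(X_{\mathbf{n},\mathbf{d}})$ is defective when its dimension is strictly less than $\min\{s(\dim X_{\mathbf{n},\mathbf{d}}+1),\ N(\mathbf{n},\mathbf{d})\} - 1$, which in our notation equals $\min\{s[D(\mathbf{n})+1],\ N(\mathbf{n},\mathbf{d})\} - 1$ since $\dim X_{\mathbf{n},\mathbf{d}} = D(\mathbf{n})$.

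First I would invoke Theorem~\ref{th:dim}: under Conditions~(\ref{eq:ineq0}) and (\ref{eq:ineq1}), we have the upper bound
\[
\dim \sigma_s(X_{\mathbf{n},\mathbf{d}}) \leq F(\mathbf{n}, \mathbf{e}_0, \mathbf{e}_1).
\]
Then I would chain this with the hypothesis (\ref{eq:ineq3}) to obtain
\[
\dim \sigma_s(X_{\mathbf{n},\mathbf{d}}) \leq F(\mathbf{n}, \mathbf{e}_0, \mathbf{e}_1) < \min\{s[D(\mathbf{n})+1],\ N(\mathbf{n},\mathbf{d})\} - 1,
\]
which is precisely the defining inequality for $\sigma_s(X_{\mathbf{n},\mathbf{d}})$ to be defective.

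There is essentially no obstacle; the result is a one-line consequence of the preceding theorem together with the definition of defectivity. The only bookkeeping is noting that the right-hand side of (\ref{eq:ineq3}) matches the expected dimension as set up in Section~\ref{sec:preliminaries}, so that strict inequality here really does translate to defectivity rather than merely to a better upper bound on the dimension.
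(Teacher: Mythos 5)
Your proposal is correct and matches the paper's intent exactly: the paper presents this corollary as ``an immediate consequence'' of Theorem~\ref{th:dim}, and your chaining of the upper bound $\dim\sigma_s(X_{\mathbf{n},\mathbf{d}})\leq F(\mathbf{n},\mathbf{e}_0,\mathbf{e}_1)$ with hypothesis~(\ref{eq:ineq3}) and the definition of the expected dimension is precisely that one-line argument. Your bookkeeping remark that $\min\{s[D(\mathbf{n})+1],\,N(\mathbf{n},\mathbf{d})\}-1$ is the expected dimension (since the ambient space is $\P^{N(\mathbf{n},\mathbf{d})-1}$) is the only point that needs checking, and you have handled it correctly.
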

\begin{remark}
\label{rem:superabundant}
It is straightforward to verify that 
the following equality holds for any $\mathbf{n}$, $\mathbf{d}$, $\mathbf{e}_0$, $\mathbf{e}_1$ and $s$:
\begin{eqnarray}
\label{eq:superabundant}
N(\mathbf{n}, \mathbf{d}) -1 - F(\mathbf{n}, \mathbf{e}_0, \mathbf{e}_1)  = \left[N(\mathbf{n},\mathbf{e}_0)-s\right]\left[N(\mathbf{n},\mathbf{e}_1)-s\right].
\end{eqnarray}
Hence if (\ref{eq:ineq1}) holds, then  $N(\mathbf{n}, \mathbf{d}) -1 - F(\mathbf{n}, \mathbf{e}_0, \mathbf{e}_1) >0$ by (\ref{eq:superabundant}).  
From Corollary~\ref{cor:defect} it follows therefore that  if  $\mathbf{n}$, $\mathbf{d}$, $\mathbf{e}_0$, $\mathbf{e}_1$ and $s$ satisfy (\ref{eq:ineq0}) and (\ref{eq:ineq1}) and if 
\begin{equation}\label{ineq-new}
s \geq \lceil N(\mathbf{n},\mathbf{d})/(D(\mathbf{n})+1) \rceil, 
\end{equation}
then $\sigma_s(X_{\mathbf{n}, \mathbf{d}})$ is defective.   
\end{remark}
\begin{example}
For an arbitrary positive integer $a$ with $a \geq 2$, consider the following two cases:
\begin{itemize}
\item $\mathbf{n}=(1,1,a)$, $\mathbf{d}=(1,1,2)$, $s=2a+1$;
\item $\mathbf{n}=(a,a,2)$, $\mathbf{d}=(1,1,2)$, $s=3a+2$.
\end{itemize}
In each of these cases, let $\mathbf{e}_0=(1,0,1)$ and $\mathbf{e}_1=(0,1,1)$.  
It is immediate to check that \eqref{eq:ineq0}, \eqref{eq:ineq1} and \eqref{ineq-new} hold. 
Hence, by Remark \ref{rem:superabundant}, we can conclude that 
$\sigma_s(X_{\mathbf{n}, \mathbf{d}})$ is defective.
Note that the defective cases listed above were first discovered by Catalisano, Geramita and Gimigliano (see \cite{CGG} for more details).
\end{example}
\begin{theorem}
\label{th:three-factor} 
Let $\mathbf{n} = (n,n+a,1)$, let $\mathbf{d} = (1,1,d)$ with $d \geq 2$, $\mathbf{e}_0 = (1,0,d-e)$ with $1 \leq e \leq d-1$ and let $\mathbf{e}_1 = (0,1,e)$. If $\mathbf{n}$, $\mathbf{d}$, $\mathbf{e}_0$, $\mathbf{e}_1$ and $s$ satisfy (\ref{eq:ineq0}) and (\ref{eq:ineq1}), then $\sigma_s(X_{\mathbf{n}, \mathbf{d}})$ is defective. 
\end{theorem}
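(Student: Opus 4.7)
The plan is to derive defectivity directly from Corollary~\ref{cor:defect}: since the hypotheses already include (\ref{eq:ineq0}) and (\ref{eq:ineq1}), the only remaining task is to verify (\ref{eq:ineq3}), namely
\[
F(\mathbf{n},\mathbf{e}_0,\mathbf{e}_1) < \min\{s[D(\mathbf{n})+1],\ N(\mathbf{n},\mathbf{d})\}-1.
\]
First I would write down the specialized values:
\[
N(\mathbf{n},\mathbf{d})=(n+1)(n+a+1)(d+1),\quad D(\mathbf{n})+1=2n+a+2,
\]
\[
N(\mathbf{n},\mathbf{e}_0)=(n+1)(d-e+1),\quad N(\mathbf{n},\mathbf{e}_1)=(n+a+1)(e+1).
\]

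The bound $F<N(\mathbf{n},\mathbf{d})-1$ is essentially free: by the identity~(\ref{eq:superabundant}) in Remark~\ref{rem:superabundant} we have $N(\mathbf{n},\mathbf{d})-1-F=[N(\mathbf{n},\mathbf{e}_0)-s][N(\mathbf{n},\mathbf{e}_1)-s]$, and both factors are strictly positive by~(\ref{eq:ineq1}).

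The more delicate bound is $F<s[D(\mathbf{n})+1]-1$. Here I would combine (\ref{eq:superabundant}) with the explicit values to rewrite
\[
s[D(\mathbf{n})+1]-1-F \;=\; s[D(\mathbf{n})+1]-N(\mathbf{n},\mathbf{d})+[N(\mathbf{n},\mathbf{e}_0)-s][N(\mathbf{n},\mathbf{e}_1)-s],
\]
and check two algebraic identities: with $x=(n+1)(d-e)$ and $y=(n+a+1)e$, one has $N(\mathbf{n},\mathbf{e}_0)+N(\mathbf{n},\mathbf{e}_1)=x+y+D(\mathbf{n})+1$ and $N(\mathbf{n},\mathbf{e}_0)N(\mathbf{n},\mathbf{e}_1)-N(\mathbf{n},\mathbf{d})=xy$. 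The second of these reduces to the easy polynomial identity $(d-e+1)(e+1)-(d+1)=(d-e)e$ after factoring out $(n+1)(n+a+1)$. Substituting, the expression above collapses to the clean quadratic
\[
s[D(\mathbf{n})+1]-1-F \;=\; (s-x)(s-y).
\]

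Finally I would read off positivity from hypothesis (\ref{eq:ineq1}): for $i=0$ it gives $s\ge N(\mathbf{n},\mathbf{e}_0)-n=x+1$, and for $i=1$ it gives $s\ge N(\mathbf{n},\mathbf{e}_1)-(n+a)=y+1$, so $(s-x)(s-y)\ge 1$ and hence $F<s[D(\mathbf{n})+1]-1$. Both inequalities together yield (\ref{eq:ineq3}), and Corollary~\ref{cor:defect} concludes defectivity. The only step that takes real work is the algebraic factorization $(s-x)(s-y)$; everything else is a direct specialization of the machinery already set up, so the main (mild) obstacle is recognizing the right substitution $x=(n+1)(d-e)$, $y=(n+a+1)e$ to see that the bound coming from Corollary~\ref{cor:defect} is controlled precisely by the lower bounds on $s$ supplied by~(\ref{eq:ineq1}).
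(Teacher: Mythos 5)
Your proof is correct and follows essentially the same route as the paper: both reduce to Corollary~\ref{cor:defect}, dispose of the bound $F<N(\mathbf{n},\mathbf{d})-1$ via Remark~\ref{rem:superabundant}, and establish the key factorization $s[D(\mathbf{n})+1]-1-F=(s-x)(s-y)$, which with $x=(n+1)(d-e)$ and $y=(n+a+1)e$ is exactly the paper's identity $\bigl[s-N(\mathbf{n},\mathbf{e}_0)+\sum_{j\in\Lambda_0}\mathbf{n}[j]\bigr]\bigl[s-N(\mathbf{n},\mathbf{e}_1)+\sum_{j\in\Lambda_1}\mathbf{n}[j]\bigr]$ written out explicitly. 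The only cosmetic difference is that you read off positivity of the two factors from~(\ref{eq:ineq1}) while the paper cites~(\ref{eq:ineq0}); for this choice of $\mathbf{n}$ and the $\mathbf{e}_i$ the two conditions yield identical lower bounds on $s$, so either works.
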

\begin{proof}
By Corollary~\ref{cor:defect} and Remark~\ref{rem:superabundant}, it is sufficient to show that 
\[
s[D(\mathbf{n}) +1]-1-F(\mathbf{n}, \mathbf{e}_0, \mathbf{e}_1)  \ge 0.
\]
To do so, we will prove the following equality:
\[
s[D(\mathbf{n}) +1]-1-F(\mathbf{n}, \mathbf{e}_0, \mathbf{e}_1)  =  \left[s- N(\mathbf{n},\mathbf{e}_0)+\sum_{j \in \Lambda_0} \mathbf{n}[j]\right]\left[s- N(\mathbf{n},\mathbf{e}_1)+\sum_{j \in \Lambda_1} \mathbf{n}[j]\right]. 
\]
Then the conclusion follows immediately from \eqref{eq:ineq0}.

By (\ref{eq:superabundant}) one can easily obtain
\[
s[D(\mathbf{n}) +1]-1-F(\mathbf{n}, \mathbf{e}_0, \mathbf{e}_1)  = 
[s-N(\mathbf{n},\mathbf{e}_0)][s-N(\mathbf{n},\mathbf{e}_1)]-N(\mathbf{n}, \mathbf{d}) +s[D(\mathbf{n})+1].  
\]
We thus need to show 
\begin{eqnarray*}
&& [s-N(\mathbf{n},\mathbf{e}_0)][s-N(\mathbf{n},\mathbf{e}_1)]-N(\mathbf{n}, \mathbf{d}) +s[D(\mathbf{n})+1] \\
& = & 
\left[s- N(\mathbf{n},\mathbf{e}_0)+\sum_{j \in \Lambda_0} \mathbf{n}[j]\right]\left[s- N(\mathbf{n},\mathbf{e}_1)+\sum_{j \in \Lambda_1} \mathbf{n}[j]\right]
\end{eqnarray*}
or equivalently 
\begin{eqnarray*}
& & s\left(\sum_{j \in \Lambda_0} n[j] + \sum_{j \in \Lambda_1} n[j] -D(\mathbf{n})-1\right) \\
& = & N(\mathbf{n}, \mathbf{e}_0) \sum_{j \in \Lambda_1} n[j] +N(\mathbf{n}, \mathbf{e}_1) \sum_{j \in \Lambda_0} n[j]  -  \sum_{j \in \Lambda_0} n[j]\sum_{j \in \Lambda_1} n[j] - N(\mathbf{n}, \mathbf{d}). 
\end{eqnarray*}
If $\mathbf{n} = (n,n+a,1)$, $\mathbf{d} = (1,1,d)$, $\mathbf{e}_0 = (1,0,d-e)$ and $\mathbf{e}_1 = (0,1,e)$, then 
\[
\sum_{j \in \Lambda_0} n[j] + \sum_{j \in \Lambda_1} n[j] -D(\mathbf{n})-1 = (n+1)+(n+a+1)-(2n+a+1)-1 = 0. 
\]
and 
\begin{eqnarray*}
& & N(\mathbf{n}, \mathbf{e}_0) \sum_{j \in \Lambda_1} n[j] +N(\mathbf{n}, \mathbf{e}_1) \sum_{j \in \Lambda_0} n[j]  -  \sum_{j \in \Lambda_0} n[j]\sum_{j \in \Lambda_1} n[j] - N(\mathbf{n}, \mathbf{d}) \\
& = & (n+1)(d-e+1)(n+a+1)+(n+a+1)(e+1)(n+1) \\ 
&& -(n+1)(n+a+1)-(n+1)(n+a+1)(d+1) \\
& = & 0.
\end{eqnarray*}
So the desired equality holds, and hence we completed the proof.  
\end{proof}
In order to find explicit examples of defective secant varieties by applying Theorem \ref{th:three-factor},  one has to solve, for given $d \geq 2$ and $1 \leq e \leq d-1$,  the system of inequalities  (\ref{eq:ineq0}) and (\ref{eq:ineq1}) for $a$ and $s$. 
Below we will discuss  two simpler cases. 
\begin{example}
\label{ex:even}
For given $n, d \in \N$,  
let  $a \in \{0, \dots, \lceil n/d \rceil-1\}$, let $\mathbf{n} = (n,n+a,1)$ and let $\mathbf{d}=(1,1,2d)$. 
Consider $\mathbf{e}_0=(1,0,d)$ and $\mathbf{e}_1= (0,1,d)$. 
Then $\Lambda_0 = \{0,2\}$ and $\Lambda_1 = \{1,2\}$, and we obtain 
\[
 N(\mathbf{n},\mathbf{e}_i) - \sum_{j \in \Lambda_i} \mathbf{n}[j] +1 = 
 \left\{ 
 \begin{array}{ll}
 (n+1)d+1 & \mbox{if $i=0$;} \\
 (n+a+1)d+1 & \mbox{if $i=1$.} 
 \end{array}
 \right. 
\]
Thus 
\[
\max_{i \in \Z_2} \left\{N(\mathbf{n},\mathbf{e}_i) - \sum_{j \in \Lambda_i} \mathbf{n}[j] +1\right\}
=  (n+a+1)d+1. 
\]
For a given $k \in \{1, \dots, n-ad\}$, let $s = (n+a+1)d+k$. Then $s$ clearly satisfies Condition~(\ref{eq:ineq0}). 
Moreover since 
\[
\begin{array}{ccccccccc}
\mathbf{n}[\Lambda_0[0]]  & = &  n & \geq & N(\mathbf{n},\mathbf{e}_0)-s & = &  n-ad-k+1 & > &  0 \\ 
\mathbf{n}[\Lambda_1[0]]  & = &  n+a & \geq & N(\mathbf{n},\mathbf{e}_1)-s & = &  n+a-k+1 & > &  0,
\end{array}
\]
(\ref{eq:ineq1}) is also satisfied.  
Therefore, it follows from Theorem~\ref{th:three-factor} that $\sigma_s(X_{\mathbf{n}, \mathbf{d}})$ is defective.  
\end{example}
%
\begin{example}
Let  $a$ be a non-negative integer. 
For given $n, d \in \N$, let $\mathbf{n} = (n,n+a,1)$ and let $\mathbf{d}=(1,1,2d+1)$. 
Consider $\mathbf{e}_0=(1,0,d+1)$ and $\mathbf{e}_1= (0,1,d)$. 
Then $\Lambda_0 = \{0,2\}$ and $\Lambda_1 = \{1,2\}$, and we obtain 
\[
 N(\mathbf{n},\mathbf{e}_i) - \sum_{j \in \Lambda_i} \mathbf{n}[j] +1 = 
 \left\{ 
 \begin{array}{ll}
 (n+1)(d+1)+1 & \mbox{if $i=0$;} \\
 (n+a+1)d+1 & \mbox{if $i=1$.} 
 \end{array}
 \right. 
\]
Now consider the following two cases: (i) $a \in \{0, \dots, \lfloor (n+1)/d \rfloor\}$ and (ii) $a \in \left\{\lfloor (n+1)/d \rfloor+1, \dots, \lfloor 2n/d \rfloor \right\}$.  

\vspace{2mm}
\noindent 
(i) Let $a \in \{0, \dots, \lfloor (n+1)/d \rfloor\}$. Then  
\[
\max_{i \in \Z_2} \left\{N(\mathbf{n},\mathbf{e}_i) - \sum_{j \in \Lambda_i} \mathbf{n}[j] +1\right\}
=  (n+1)(d+1)+1. 
\]
For a given $k \in \{1, \dots, \min\{n+1,a(d+1)\}-1\}$, let $s = (n+1)(d+1)+k$. Then $s$ clearly satisfies Condition~(\ref{eq:ineq0}). Moreover since 
\[
\begin{array}{ccccccccc}
\mathbf{n}[\Lambda_0[0]]  & = &  n & \geq & N(\mathbf{n},\mathbf{e}_0)-s & = &  n+1-k & > &  0 \\ 
\mathbf{n}[\Lambda_1[0]]  & = &  n+a & \geq & N(\mathbf{n},\mathbf{e}_1)-s & = &  a(d+1)-k & > &  0,
\end{array}
\]
then Inequalities (\ref{eq:ineq1}) are also satisfied. Thus the defectivity of $\sigma_s(X_{\mathbf{n}, \mathbf{d}})$ follows from Theorem~\ref{th:three-factor}.

\vspace{2mm}
\noindent 
(ii) Let $a \in \left\{\lfloor (n+1)/d \rfloor+1, \dots, \lfloor 2n/d \rfloor \right\}$.  Then  
\[
\max_{i \in \Z_2} \left\{N(\mathbf{n},\mathbf{e}_i) - \sum_{j \in \Lambda_i} \mathbf{n}[j] +1\right\}
=  (n+a+1)d+1. 
\]
Let $k \in \{1, \dots, \min \{n+a, 2n-ad+1\}\}$ and let $s = (n+a+1)d+k$. Then one can show that Conditions (\ref{eq:ineq0}) and (\ref{eq:ineq1}) are satisfied in the same way as in (i). 
The secant variety $\sigma_s(X_{\mathbf{n}, \mathbf{d}})$ is therefore defective by Theorem~\ref{th:three-factor}. 
\end{example}
There are infinitely many defective secant varieties of Segre-Veronese varieties with four factors, 
which we describe below. 
\begin{theorem}
For given positive integers $n$ and $d$ with $d \geq 2$, let 
$-1 \leq k < (nd-3n+d-2)/(2n+1)$,   
let $\mathbf{n} = (1,n, dn+d+k,1)$, let $\mathbf{d} = (1,1,1,d)$ and let $s = 2d(n+1)-1$.  
Then $\sigma_s(X_{\mathbf{n}, \mathbf{d}})$ is defective. 
\end{theorem}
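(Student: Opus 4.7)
The plan is to invoke Corollary~\ref{cor:defect} with the decomposition
\[
\mathbf{e}_0 = (1, 1, 0, d-1), \qquad \mathbf{e}_1 = (0, 0, 1, 1),
\]
so that $\Lambda_0 = \{0,1,3\}$ and $\Lambda_1 = \{2,3\}$. This satisfies $\mathbf{e}_0 + \mathbf{e}_1 = \mathbf{d}$, the requirement $1 \leq \mathbf{e}_0[3] = d-1 \leq d-1$ (using $d \geq 2$), and the condition $\mathbf{e}_i[\Lambda_i[0]] = 1$ that is tacitly used in the proof of Proposition~\ref{prop:rnc}. The guiding heuristic is to make $N(\mathbf{n},\mathbf{e}_0)$ and $N(\mathbf{n},\mathbf{e}_1)$ each within a few units of $s$: the prescribed value $s = 2d(n+1) - 1$ is essentially forced by this balancing.

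Direct computation gives $N(\mathbf{n},\mathbf{e}_0) = 2d(n+1)$ and $N(\mathbf{n},\mathbf{e}_1) = 2(dn+d+k+1)$, whence
\[
N(\mathbf{n},\mathbf{e}_0) - s \;=\; 1, \qquad N(\mathbf{n},\mathbf{e}_1) - s \;=\; 2k + 3,
\]
both strictly positive because $k \geq -1$. Inequality~(\ref{eq:ineq1}) then reads $\mathbf{n}[0] = 1 \geq 1$ and $\mathbf{n}[2] = dn+d+k \geq 2k+3$, the latter being comfortably implied by the hypotheses since $(nd-3n+d-2)/(2n+1) \leq dn + d - 3$ is routine for $n \geq 1$, $d \geq 2$. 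A similar direct check reduces Inequality~(\ref{eq:ineq0}) to $n \geq 0$ and $k \leq dn + d - 3$, both of which hold.

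It remains to verify Inequality~(\ref{eq:ineq3}). By Remark~\ref{rem:superabundant}, $N(\mathbf{n}, \mathbf{d}) - 1 - F = 1 \cdot (2k+3) > 0$, so $F < N(\mathbf{n},\mathbf{d}) - 1$ is automatic. Using the same identity to substitute for $F$, a short algebraic expansion (conveniently done by setting $M = n+1$) yields
\[
s[D(\mathbf{n})+1] \,-\, 1 \,-\, F \;=\; (n+1)(d - 2k - 3) + k + 1,
\]
which is positive precisely when $k(2n+1) < (n+1)(d-3) + 1$, i.e.,
\[
k \;<\; \frac{nd - 3n + d - 2}{2n+1}.
\]
This is exactly the hypothesis of the theorem, so both $F < s[D(\mathbf{n})+1] - 1$ and $F < N(\mathbf{n},\mathbf{d}) - 1$ hold, giving $F < \min\{s[D(\mathbf{n})+1],\, N(\mathbf{n},\mathbf{d})\} - 1$. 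Corollary~\ref{cor:defect} then delivers the defectivity of $\sigma_s(X_{\mathbf{n},\mathbf{d}})$. The only real obstacle is the bookkeeping in the final algebraic identity; the essential conceptual step is selecting the right $\mathbf{e}_0$ and $\mathbf{e}_1$.
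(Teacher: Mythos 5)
Your proposal is correct and follows essentially the same route as the paper: the same decomposition $\mathbf{e}_0=(1,1,0,d-1)$, $\mathbf{e}_1=(0,0,1,1)$, the same verification of Inequalities~(\ref{eq:ineq0}) and (\ref{eq:ineq1}), and the same two computations $N(\mathbf{n},\mathbf{d})-1-F=2k+3$ and $s[D(\mathbf{n})+1]-1-F=-(2n+1)k+nd-3n+d-2$ (your form $(n+1)(d-2k-3)+k+1$ is the same quantity), feeding into Corollary~\ref{cor:defect}. No gaps.
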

\begin{proof}
Let $\mathbf{e}_0 = (1,1,0,d-1)$ and let $\mathbf{e}_1 = (0,0,1,1)$. Then $\Lambda_0 = \{0,1,3\}$ and $\Lambda_1 = \{2,3\}$. 
Since $N(\mathbf{n},\mathbf{e}_0) = 2d(n+1)$, $N(\mathbf{n},\mathbf{e}_1) = 2[d(n+1)+k+1]$ 
and $k\leq d(n+1)-3$ for every $k \in\{-1, (nd-3n+d-2)/(2n+1)-1\}$, we obtain
\[
\max_i \left\{N(\mathbf{n},\mathbf{e}_i)- \sum_{j \in \Lambda_i}\mathbf{n}[j]+1\right\} \leq s. 
\]
Thus $s$ satisfies (\ref{eq:ineq0}). 
We also have 
\[
\mathbf{n}[\Lambda_i[0]] - [N(\mathbf{n},\mathbf{e}_i) - s] =  
\left\{
\begin{array}{ll} 
0 & \mbox{if $i = 0$,} \\
d(n+1)-k-3 & \mbox{if $i = 1$,}
\end{array}
\right.
\]
and $N(\mathbf{n},\mathbf{e}_i) > s$ for each $i\in\Z_2$, and hence (\ref{eq:ineq1}) is satisfied. 
Note that 
$N(\mathbf{n},\mathbf{d}) \geq s[D(\mathbf{n})+1]$ if and only if $(nd-3n+d-5)/(2n+3) \leq k$. 
Now it is easy to check the following inequalities hold:
\[
N(\mathbf{n}, \mathbf{d}) -1 - F(\mathbf{n}, \mathbf{e}_0, \mathbf{e}_1)  =  2k+3 > 0 
\]
\[
 s[D(\mathbf{n}) +1]-1-F(\mathbf{n}, \mathbf{e}_0, \mathbf{e}_1)  =  - (2n+1)k + nd - 3n + d - 2 >0 
\]
for every  $k \in \{-1, (nd-3n+d-2)/(2n+1)-1\}$. 
Hence Condition \eqref{eq:ineq3} is also satisfied,  and thus 
it follows from Corollary~\ref{cor:defect} that $\sigma_s(X_{\mathbf{n}, \mathbf{d}})$ does not have the expected dimension. 
\end{proof}

\noindent 
{\it Acknowledgements}. 
We thank Maria Virginia Catalisano for sharing with us the list of  known defective secant varieties of Segre-Veronese varieties, which was the starting point of this research.  


\end{document}